\DeclareFixedFootnote{\repnote}{Dipartimento di Scienze Matematiche “Giuseppe Luigi Lagrange”, Dipartimento di Eccellenza 2018-2022, Politecnico di Torino, Corso Duca degli Abruzzi 24, 10129 Torino Italy.}
\theoremstyle{plain}
\newtheorem{thm}{Theorem}[section]
\newtheorem{cor}[thm]{Corollary}
\newtheorem{prop}[thm]{Proposition}
\theoremstyle{definition}
\newtheorem{oss}[thm]{Remark}
\newtheorem{ese}[thm]{Example}
\theoremstyle{remark}
\def\diam{\operatorname{diam}}
\author{Matteo Levi$^1$\footnote{m.l.matteolevi@gmail.com}, Federico Santagati$^2$\footnote{federico.santagati@polito.it},  Anita Tabacco$^2$\footnote{anita.tabacco@polito.it} and Maria Vallarino$^2$\footnote{maria.vallarino@polito.it}}
\title{{\bf Poincaré inequalities on graphs}}
\date{\vspace{-5ex}}
\begin{document}

\unmarkedfntext{{\em 2020 Mathematics Subject Classification}:  05C63; 05C05; 05C21; 26D10; 30L15.\newline\hspace*{0.5em}{\em Keywords}: Poincar\'e inequalities; graphs; trees; nondoubling measure.\newline\hspace*{0.5em}\!\!\!$^1$: MaLGa Center - DIBRIS  - Università di Genova, Genoa, Italy. \\ $^2$: Dipartimento di Scienze Matematiche “Giuseppe Luigi Lagrange”, Dipartimento di Eccellenza 2018-2022, Politecnico di Torino, Corso Duca degli Abruzzi 24, 10129 Torino Italy.\newline}

\maketitle

  \begin{abstract}
Every graph of bounded degree endowed with the counting measure satisfies a local version of $L^p$-Poincaré inequality, $ p\in[1,\infty]$. We show that on graphs which are trees the Poincaré constant grows at least exponentially with the radius of balls. On the other hand, we prove that, surprisingly, trees endowed with a flow measure support a global version of $L^p$-Poincaré inequality, despite the fact that they are nondoubling measures of exponential growth.
\end{abstract}

\section{Introduction and notation} 

 The doubling condition and the validity of (some form of) Poincaré inequality have been proved to be natural assumptions to develop analysis on noncompact manifolds, Lie groups, infinite graphs, and more generally on metric spaces. See \cite{HK, HEINONEN} and the references therein for an overview on the topic.

These assumptions, however, seem to be often too restrictive: in many concrete situations, indeed, the two conditions may be verified to hold only locally. On the other hand, the family of locally doubling metric measure spaces supporting a local Poincaré inequality is expected to be wide enough to comprise many of the concrete examples of metric space it is common to work with, and it might be considered folklore that the local version of these assumptions should often be enough to develop (local) analysis.

In this note we focus on infinite graphs. First, we give a simple proof of the fact that every graph of bounded degree endowed with the counting measure (more general, any measure uniformly bounded away from zero and infinity) satisfies a local version of $L^p$-Poincaré inequality, for any $p\in[1,\infty]$. 
 This result is probably folklore, the proof being based on a quite standard reasoning; we include it for the lack of precise references in the literature.
We show that in the case of trees the constant appearing in the Poincaré inequality that we prove, which grows exponentially with the radius, is optimal. In particular, the global Poincaré inequality is not satisfied on  trees with degree strictly larger than $2$ endowed with the counting measure. On the other hand, we will show that flow measures, which are a natural family of  nondoubling measures of at least exponential growth on trees, satisfy the global version of Poincaré inequality on trees, hence proving to be better behaved than the counting measure in this context.
To the best of our knowledge, there are no other examples in the literature of global Poincaré inequalities on metric measure spaces of exponential growth. Our result might pave the way to the study of global $L^p$-Poincaré inequalities on nondoubling metric spaces: as far as we know, weighted global Poincaré inequalities have been considered on some nondoubling setting of polynomial growth (see for example \cite{franchi}). 

Let us now introduce some piece of notation in order to describe more precisely the content of this note. Let $X$ be an infinite, locally finite, connected, and undirected graph. We identify $X$ with its set of vertices and we write $x \sim y$ whenever  $x,y \in X$ are neighbors, namely, when they are connected by an edge. 
 We denote by $\mathrm{deg}(x)$ the number of neighbors of $x$. We say that the graph $X$ has bounded degree $b+1$ if $\mathrm{deg}(x)\leq b+1$, for some $b\geq 1$ and every $x\in X$. A path of length $n\in\mathbb N$ connecting two vertices $x$ and $y$ is a sequence $\{x_0,x_1,\dots,x_n\}\subset X$, with no repeated vertices, such that $x_0=x$, $x_n=y$, and $x_i\sim x_{i+1}$ for every $i=0,\dots,n-1$. The distance $d(x,y)$ is defined as the minimum of the lengths of the paths connecting $x$ and $y$. For any $x\in X$ and $r\ge 0$, the ball of radius $r$ and center $x$ is $B_r(x)=\{ y\in X: \ d(x,y)\leq r\}$.
For every subset $E$ of $X$ the diameter of $E$ is $\diam(E)=\sup\{d(x,y): x,y\in E\}$.

We say that a subset $E$ of $X$ is \emph{quasiconvex} if for every couple of vertices $x,y$ in $E$ there exists a path contained in $E$ connecting $x$ and $y$ of length $\leq 2 \diam(E)$. This is the same as asking that $E$ is quasiconvex as a metric space (see for instance \cite{HK, HeinKosk}) with the metric induced by the ambient space $X$. Examples of sets which are quasiconvex in any graph are all the geodesically convex sets, as well as balls (which, on a general graph, might not be geodesically convex). Observe that on trees, which are connected graphs without cycles, the notion of quasiconvex set coincides with the notion of connected set; indeed, each connected set in a tree is geodesically convex, since every path is a geodesic.

Any nonnegative function $\mu$ on $X$ induces a measure on $X$; with slight abuse of notation, for any subset $E\subseteq X$, we set $\mu(E)=\sum_{x\in E}\mu(x)$. In particular, we denote by $|\cdot |$ the counting measure, i.e., $|E|$ is the cardinality of $E$.
 
A measure $\mu$ is said to be locally doubling if for any $R>0$ there exists a constant $D(R)$ such that for any $0\le r\le R$ \begin{align}\label{locdoubl}
    \mu(B_{2r}(x))\leq D(R)\mu(B_r(x)),\quad \text{for every } x\in X \ .
\end{align}
 
For any $1\leq p <\infty$, we denote by $L^p(X,\mu)$ the space of functions $f:X \to \mathbb{C}$ such that the norm $\|f\|_{L^p(X,\mu)}=\Big(   \sum_{x\in X}|f(x)|^p \mu(x)\Big)^{1/p}$ is finite, and by $L^\infty(X,\mu)$ the space of function such that $\|f\|_{L^\infty(X,\mu)}=\sup_{x\in X}|f(x)|<\infty$.

For every function $f:X \to \mathbb{C}$ we define the length of the gradient of $f$ as the function $|\nabla f|:X\rightarrow \mathbb R$ defined by 
$$|\nabla f|(x)=\sum_{y \sim x} |f(x)-f(y)|, \qquad x\in X.$$
This is a standard notion for analysis on graphs and it plays the role that the upper gradients defined in \cite{HeinKosk} play in analysis on metric spaces. {\color{green}}

 We say that $(X,\mu)$ satisfies a local $L^p$-Poincaré inequality, $p\in [1,\infty]$, if for any $R>0$ there exists a positive constant $P_p(R)$ such that for   any function $f:X\to\mathbb{C}$ and any quasiconvex set $E$ of diameter $0\le 2r\leq R$ it holds
\begin{equation}\label{PpRC}
\|f-f_E\|_{L^p(E,\mu)}\le  P_p(R)r\||\nabla f|\|_{L^p(E,\mu)},
\end{equation}
where $f_E=\frac{1}{\mu(E)}\sum_{x\in E}f(x)\mu(x)$.

In case the constant $P_p(R)$ may be made independent of $R$, we say that $(X,\mu)$ satisfies a global $L^p$-Poincaré inequality. More precisely, $(X,\mu)$ satisfies a global $L^p$-Poincaré inequality, $p\in [1,\infty]$, if there exists a positive constant $P_p$ such that for any function $f:X\to\mathbb{C}$ and any quasiconvex set $E$ of diameter $2r$ it holds
\begin{equation}\label{PpC}
\|f-f_E\|_{L^p(E,\mu)}\le  P_pr\||\nabla f|\|_{L^p(E,\mu)}.
\end{equation}

Notice that when $E$ is a ball, \eqref{PpRC} and  \eqref{PpC} are the standard local and global $L^p$-Poincaré inequalities studied in the literature \cite{CK, delmotte1999, SF, CS, SC2}.

In Section \ref{sec: 2}, we prove an $L^p$-estimate for $f-f_E$ expressed in terms of the $L^p$-norm of $|\nabla f|$ for every function $f$ and every quasiconvex set $E$ on a graph  $X$ endowed with measures positively bounded from below (see Theorem \ref{th: poincaré debole}). 
 As a consequence, we prove a local $L^p$-Poincaré inequality for quasiconvex sets on every infinite graph endowed with a measure positively bounded from below and above (see Corollary \ref{cor: poincare counting measure}). The counting measure is obviously included in this class of measures. 
 
In Section \ref{sec: opt} we discuss the optimality of the results of Section \ref{sec: 2}. First, we prove that the assumption on the quasiconvexity of the set $E$ in Theorem \ref{th: poincaré debole} cannot be weakened by simply assuming that $E$ is connected. Next, we show that also the assumption on the boundedness from below of the measure cannot in general be dropped by exhibiting an appropriate example (see Example \ref{es : 2}). Moreover, we prove that the growth of the constant involved in the local $L^p$-Poincaré inequality, which may be exponential with respect to the radius of the balls, is optimal in the case when $p=1,\infty$ for a suitable class of trees which includes the homogeneous tree. It is worth mentioning that a similar discussion on the exponential growth of the constant was carried out on the so called $ax+b$ groups in \cite{BPV}.
\\ \indent Surprisingly, in the last section we are able to prove a global $L^p$-Poincaré inequality for quasiconvex sets and for flow measures on infinite trees, a class of measures introduced in \cite{LSTV} (see \eqref{flussi} for the precise definition). This represents a further evidence that these measures, despite being nondoubling, of exponential growth and not positively bounded from below nor from above, are very well behaved with respect to analysis on trees.

We remark that the Poincaré inequalities that we prove, and to which we address simply as $L^p$-Poincaré inequalities, are indeed $(p,p)$-strong Poincaré inequalities. The term strong here refers to the fact that the integral on the right-hand side of \eqref{PpRC} is taken on the same set than the integral on the left-hand side, and not on an enlarged set. The specification $(p,p)$, instead, denotes a difference with another class of inequalities, the $(1,p)$-Poincaré inequalities (see for example \cite[Equation (8.1.1)]{HKST}). These are, for instance, the Poincaré inequalities treated in \cite{HK,HeinKosk} in the generality of metric measure spaces. We point out that the $L^1$-Poincaré inequalities that we prove imply the corresponding $(1,p)$-Poincaré inequalities for every $p>1$, while in general they are not enough to imply $L^p$-Poincaré inequalities for any $p>1$. Nevertheless, as previously described, for the cases under study, we obtain $L^p$-Poincaré inequalities for every $p\in [1,\infty]$.

 (Local) Poincaré inequalities combined with the (local) doubling condition, are a standard tool to obtain (local) Harnack inequalities both in continuous and discrete settings (see \cite{delmotte1997, delmotte1999, RSV, SF}). In particular, it would be interesting to combine the global Poincaré inequalities for trees endowed with nondoubling flow measures that we obtain in Section \ref{sec: flows} with a suitable substitute of the doubling condition for a family of connected subsets of the tree (see \cite{LSTV}) to get global Harnack inequalities and estimates of the heat kernel of flow Laplacians.
This seems to be a challenging problem and it will be object of further investigation.


Along the paper, 
we use the standard notation $f_1(x)\lesssim f_2(x)$ to indicate
that there exists a positive constant $C$, independent of the variable $x$ but possibly depending on
some involved parameters, such that $f_1(x) \le Cf_2(x)$ for every $x$. When both $f_1(x)\lesssim f_2(x)$ and $f_2(x) \lesssim f_1(x)$ are valid, we will write $f_1(x) \approx f_2(x).$

\section{Bounded measures on graphs} \label{sec: 2}
Let $X$ be an infinite, locally finite, connected, and undirected graph.
For every $\alpha>0$ we denote by $\mathcal{M}_\alpha$ the class of measures positively bounded from below by $\alpha$, namely, 
$\mu \in \mathcal{M}_\alpha$ if $\mu(x) \ge \alpha$ for every $x \in X.$ We underline that the counting measure belongs to $\mathcal{M}_1$.

 The proof of next theorem is based on the following standard observation (see for instance, for the case $p=2$, \cite{CS} or \cite[Proposition 8.3.1]{lenz}): for every $x,y\in X$ and every $p\in [1,\infty )$,
\begin{equation}\label{standard trick}
    |f(x)-f(y)|^p \le |\gamma_{xy}|^{p-1}\sum_{z\in \gamma_{xy}}|\nabla f|(z)^p,
\end{equation}
where $\gamma_{xy}$ is a path connecting $x$ and $y$ and $|\gamma_{xy}|$ its length. Indeed, let $\gamma_{xy}=\{x_i\}_{i=0}^n$, where $x_0=x$, $x_n=y$ and $x_i \sim x_{i+1}$, $i=0,...,n-1.$ Then,
\begin{equation*}
     |f(x)-f(y)| \le \sum_{i=0}^{n-1} |f(x_i)-f(x_{i+1})| \le \sum_{z \in [x,y]}|\nabla f|(z),
\end{equation*}
and \eqref{standard trick} follows by  Hölder's inequality.

\begin{thm}\label{th: poincaré debole}
Let $E\subset X$ be a finite quasiconvex set with $\diam(E)=2r$, $p\in[1,\infty]$, and $f$ be any function on $X$. Then, for any $\alpha>0$ and for every $\mu \in \mathcal{M}_\alpha$,
\begin{align}\label{LpEE}
\|f-f_E\|_{L^p(E,\mu)}\le \bigg(\frac{\mu(E)}{\alpha}\bigg)^{1/p}(4r)^{1-1/p}\||\nabla f|\|_{L^p(E,\mu)}.
\end{align}
\end{thm}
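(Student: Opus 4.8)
The strategy is to bound $|f(x) - f(y)|$ for any two points $x, y \in E$ by summing $\nabla f$ along a path inside $E$, then to average this estimate appropriately. Fix a quasiconvex set $E$ with $\diam(E) = 2r$. For any $x, y \in E$, choose a path $x = z_0 \sim z_1 \sim \dots \sim z_n = y$ contained in $E$ with $n \le 2\diam(E) = 4r$. Along this path, a telescoping sum and the triangle inequality give
\begin{align*}
|f(x) - f(y)| \le \sum_{i=0}^{n-1} |f(z_i) - f(z_{i+1})| \le \sum_{i=0}^{n-1} \nabla f(z_i) \le \sum_{z \in E} \nabla f(z),
\end{align*}
since each $z_i$ lies in $E$ and $\nabla f(z_i)$ already contains the term $|f(z_i) - f(z_{i+1})|$ (all summands in $\nabla f(z_i)$ being nonnegative). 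This crude bound $|f(x) - f(y)| \le \|\nabla f\|_{L^1(E, |\cdot|)}$ is the core geometric input; quasiconvexity is exactly what makes the path length controllable by $\diam(E)$.

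\textbf{From the pointwise bound to the $L^p$ estimate.} Since $f_E$ is a $\mu$-average of the values $f(y)$, $y \in E$, Jensen's inequality (or simply the triangle inequality for averages) yields
\begin{align*}
|f(x) - f_E| = \Big| \frac{1}{\mu(E)} \sum_{y \in E} (f(x) - f(y)) \mu(y) \Big| \le \max_{y \in E} |f(x) - f(y)|.
\end{align*}
Next I would convert the $L^1$-norm of $\nabla f$ over $E$ (with counting measure) into an $L^p$-norm with respect to $\mu$. Because $\mu \ge \alpha$, we have, by Hölder's inequality with exponents $p$ and $p'$,
\begin{align*}
\sum_{z \in E} \nabla f(z) = \sum_{z \in E} \nabla f(z) \mu(z)^{1/p} \mu(z)^{-1/p} \le \Big( \sum_{z \in E} \nabla f(z)^p \mu(z) \Big)^{1/p} \Big( \sum_{z \in E} \mu(z)^{-p'/p} \Big)^{1/p'},
\end{align*}
and since $\mu(z)^{-p'/p} \le \alpha^{-p'/p}$, the second factor is at most $(|E| \alpha^{-p'/p})^{1/p'} = |E|^{1/p'} \alpha^{-1/p}$. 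Hence $\max_{y \in E} |f(x) - f(y)| \le \alpha^{-1/p} |E|^{1/p'} \|\nabla f\|_{L^p(E, \mu)}$, uniformly in $x$. The endpoint cases $p = 1$ and $p = \infty$ should be checked directly (for $p = \infty$ one gets $|E|^0 = 1$; for $p = 1$ the Hölder step is trivial), but they fit the same formula with the convention $1/p' = 1 - 1/p$.

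\textbf{Assembling the bound.} Raising $|f(x) - f_E| \le \alpha^{-1/p} |E|^{1-1/p} \|\nabla f\|_{L^p(E, \mu)}$ to the $p$-th power, multiplying by $\mu(x)$, and summing over $x \in E$ gives
\begin{align*}
\|f - f_E\|_{L^p(E,\mu)}^p \le \mu(E) \, \alpha^{-1} |E|^{p-1} \|\nabla f\|_{L^p(E,\mu)}^p,
\end{align*}
so $\|f - f_E\|_{L^p(E,\mu)} \le (\mu(E)/\alpha)^{1/p} |E|^{1-1/p} \|\nabla f\|_{L^p(E,\mu)}$. Finally, since $E$ is quasiconvex of diameter $2r$, any two of its points are joined by a path of length $\le 4r$ inside $E$, which forces $|E| \le 4r + 1$; more to the point, the path argument above only ever uses that the relevant path has length $\le 4r$, so one can replace $|E|$ by $4r$ throughout — restricting the sum $\sum_{z \in E} \nabla f(z)$ to the (at most $4r$) vertices actually on the chosen path in the Hölder step. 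This replaces $|E|^{1-1/p}$ by $(4r)^{1-1/p}$ and yields exactly \eqref{LpEE}. The only mild subtlety is being careful that this path-restricted Hölder estimate still dominates $\max_{y} |f(x) - f(y)|$ uniformly, which it does because for each pair $(x,y)$ the relevant sum is over a set of size $\le 4r$ contained in $E$, and extending the $L^p(\mu)$-norm back to all of $E$ only increases it.
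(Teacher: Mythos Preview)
Your proof is correct and follows essentially the same route as the paper's: telescope along a quasiconvex path of length $\le 4r$ inside $E$, apply H\"older over the (at most $4r$) vertices of the path, extend the resulting $\ell^p$-sum of $\nabla f$ from the path to all of $E$, use $\mu(z)\ge\alpha$ to pass from counting measure to $\mu$, and finally sum in $x$ with weight $\mu(x)$ to pick up the factor $\mu(E)$. The only cosmetic differences are that the paper applies Jensen in the $y$-average and inserts the factor $\mu(z)/\alpha$ at the very end, whereas you bound the average by a maximum in $y$ and insert the $\mu^{1/p}\mu^{-1/p}$ trick already inside the H\"older step; and your exposition takes an unnecessary detour through the cruder bound with $|E|^{1-1/p}$ before correcting it to $(4r)^{1-1/p}$, which you could streamline by working with the path-restricted sum from the outset.
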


\begin{proof}
Since $E$ is quasiconvex, for $x,y\in E$ we can choose the path $\gamma_{xy}$ in \eqref{standard trick} to have length smaller or equal than  $2 \diam(E)$. Then, for $p\in [1,\infty)$ and $x\in E$, by Jensen's inequality and \eqref{standard trick} one has
\begin{equation}\label{inequality}
\begin{split}
    |f(x)-f_E|^p&\le \Big(\frac{1}{\mu{(E)}}\sum_{y \in E} |f(x)-f(y)| \ \mu(y)\Big)^p \le \frac{1}{\mu{(E)}}\sum_{y \in E} |f(x)-f(y)|^p \ \mu(y)\\
    &\leq \frac{(4r)^{p-1}}{\mu{(E)}}\sum_{y \in E} \sum_{z\in \gamma_{xy}}|\nabla f|(z)^p \ \mu(y).
\end{split}
\end{equation}
 In particular, for every $x\in X$ we have
\begin{equation*}
   |f(x)-f_E|\le \frac{1}{\mu(E)} \sum_{y \in E} \sum_{z\in \gamma_{xy}}\||\nabla f|\|_{L^\infty (E,\mu)} \ \mu(y) \le 4r\||\nabla f|\|_{L^\infty (E,\mu)},
\end{equation*}
which proves the theorem for $p=\infty$. For $p\in [1,\infty)$, instead, one obtains from \eqref{inequality} that
\begin{equation*}
    |f(x)-f_E|^p\le (4r)^{p-1}\sum_{z\in E}|\nabla f|(z)^p.
\end{equation*}
Since $\mu(z)\geq \alpha$ for any $z\in X$, we obtain 
\begin{equation*}
      \sum_{x \in E}|f(x)-f_E|^p \mu(x)\leq \frac{\mu(E)}{\alpha}(4r)^{p-1}\sum_{z \in E}|\nabla f|(z)^p\mu(z).
\end{equation*}
\end{proof}

\begin{oss}
Analyzing the proof of Theorem  \ref{th: poincaré debole}, it is not difficult to observe that for $p=1$ the assumption on the quasiconvexity of the set $E$ may be replaced by the weaker request that $E$ is connected. 
On the other hand, we will show in Section \ref{sec: opt} that for $p\in (1,\infty]$ the assumption that $E$ is connected is not sufficient for Theorem \ref{th: poincaré debole} to hold. Similarly, when $p=\infty$, it is not necessary to require the positive boundedness from below of the measure. Nevertheless, in the next section we will prove that such request cannot be dropped when $p<\infty.$ \qed
\end{oss}

For every finite $\beta>0$, we denote by $\mathcal{M}^\beta$ the class of positive measures bounded from above by $\beta$, namely, $\mu \in \mathcal{M}^\beta$ if $\mu(x) \le \beta$ for every $x \in X$. For every $0< \alpha \le \beta$, we set $\mathcal{M}_\alpha^\beta=\mathcal{M}_\alpha \cap\mathcal{M}^\beta$. It is worth mentioning that $\mathcal{M}_1^1$ uniquely consists of the counting measure.

 We remark that if the graph $X$ has bounded degree, then measures in $\mathcal{M}_\alpha^\beta$ are locally doubling. Indeed, suppose that $\deg(x)\leq b+1$ and let $\mu\in\mathcal{M}_\alpha^\beta$. Then, for every $x \in X$ and $r \ge 0$, $|B_r(x)|\leq 3b^r$ and therefore, for every $R \ge r$,
\begin{equation*}
\frac{\mu (B_{2r}(x))}{\mu (B_r(x))}\leq \frac{\beta}{\alpha}\frac{|B_{2r}(x)|}{|B_r(x)|}\leq \frac{\beta}{\alpha}|B_{2r}(x)|\le\frac{\beta}{\alpha}3b^{2R}=:D(R).
\end{equation*}
On the other hand, graphs which have no bounded degree cannot support locally doubling measures. To see this, suppose that $\mu$ is a positive locally doubling measure on a graph $X$. For any $x \in X$,
\begin{align*}
       D(1/2)\mu(x) \ge \mu(B_1(x)) \ge \sum_{y\sim x} \mu(y) \ge \deg(x) \mu(\overline{y}),    
\end{align*}
 where $\overline{y}\sim x$ is such that $\mu(\overline{y}) \le \mu(y)$ for every $y \sim x.$ By exploiting the locally doubling condition again, 
 \begin{align*}
     \mu(\overline{y}) \ge \frac{\mu(B_1(\overline{y}))}{D(1/2)}\ge \frac{\mu(x)}{D(1/2)}.
 \end{align*} Combining the above inequalities, it follows that for every $x \in X$,
 \begin{align*}
     \mathrm{deg}(x) \le D(1/2)^2.
 \end{align*}

The following corollary shows that for measures in the class $\mathcal{M}_\alpha^\beta$ 
on graphs of bounded degree, we have a local $L^p$-Poincar\'e inequality for quasiconvex sets.

\begin{cor}\label{cor: poincare counting measure}
Suppose that $X$ has bounded degree $b+1$. Fix $R>0$ and let $E\subset X$ be a quasiconvex set with $\diam(E)=2r\leq R$, $p\in[1,\infty]$, and $f$ be any function on $X$. Then, for every $0<\alpha \le \beta<\infty$ and $\mu \in \mathcal{M}_\alpha^\beta$, $(X,\mu)$ satisfies the $L^p$-Poincaré inequality \eqref{PpRC}, i.e., 

\begin{equation*}
\|f-f_E\|_{L^p(E,\mu)}\le P_p(R)r\||\nabla f|\|_{L^p(E,\mu)},
\end{equation*}
with $\displaystyle P_p(R)=4\bigg(3\frac{\beta b^{R}}{4\alpha}\bigg)^{1/p}.$
\end{cor}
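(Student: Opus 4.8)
The plan is to derive Corollary~\ref{cor: poincare counting measure} directly from Theorem~\ref{th: poincaré debole} by controlling the geometric quantity $\mu(E)/\alpha$ appearing on the right-hand side of~\eqref{LpEE} in terms of the radius $R$ and the degree bound $b+1$. Since $E$ is quasiconvex with $\diam(E)=2r\le R$, fixing any $x_0\in E$ we have $E\subseteq B_{2r}(x_0)\subseteq B_R(x_0)$, so it suffices to bound the measure of a ball of radius $R$. The first step is the cardinality estimate for balls in a graph of bounded degree $b+1$: starting from $x_0$, the sphere of radius $1$ has at most $b+1$ vertices, and each subsequent sphere of radius $k\ge 1$ has at most $(b+1)b^{k-1}\le b^{k+1}$ vertices (each vertex has at most $b$ neighbours not already used), giving $|B_R(x_0)|\le 1+\sum_{k=1}^{R}(b+1)b^{k-1}$. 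A clean closed form, already quoted in the excerpt just above the corollary, is $|B_R(x_0)|\le 3b^{R}$ for $b\ge 1$ and $R\ge 0$; I would simply cite that bound. Hence, since $\mu\in\mathcal{M}^\beta$, we get $\mu(E)\le\beta|E|\le\beta|B_R(x_0)|\le 3\beta b^{R}$.

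The second step is to plug this into~\eqref{LpEE}. Theorem~\ref{th: poincaré debole} gives, for any $\mu\in\mathcal{M}_\alpha\supseteq\mathcal{M}_\alpha^\beta$,
\begin{equation*}
\|f-f_E\|_{L^p(E,\mu)}\le\Big(\frac{\mu(E)}{\alpha}\Big)^{1/p}(4r)^{1-1/p}\|\nabla f\|_{L^p(E,\mu)}
\le\Big(\frac{3\beta b^{R}}{\alpha}\Big)^{1/p}(4r)^{1-1/p}\|\nabla f\|_{L^p(E,\mu)}.
\end{equation*}
Now I rewrite $(4r)^{1-1/p}=4r\cdot(4r)^{-1/p}$ and combine the two $1/p$-powers: the constant becomes
\begin{equation*}
\Big(\frac{3\beta b^{R}}{\alpha}\Big)^{1/p}(4r)^{-1/p}\cdot 4r=4\,\Big(\frac{3\beta b^{R}}{4\alpha r}\Big)^{1/p}r.
\end{equation*}
To make the constant independent of $r$ one uses $r\ge\tfrac14$ whenever $E$ has at least two points (so $\diam(E)\ge 1$); in that case $1/r\le 4$, so $\big(\tfrac{3\beta b^R}{4\alpha r}\big)^{1/p}\le\big(\tfrac{3\beta b^R}{\alpha}\big)^{1/p}$ — but to match the exact form $P_p(R)=4\big(\tfrac{3\beta b^{R}}{4\alpha}\big)^{1/p}$ stated in the corollary, I would instead bound $b^R\le b^R$ trivially and absorb the factor $1/r$ using $r\ge 1/4$ only inside the exponent-$1/p$ term when convenient; more simply, note that the bound $|B_{2r}(x_0)|\le 3b^{2r}$ together with $2r\le R$ already yields $\mu(E)\le 3\beta b^{R}$, and the stated $P_p(R)$ follows after writing $(4r)^{1-1/p}= 4r (4r)^{-1/p}$ and using $4r\ge 1$. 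The degenerate case $r=0$ (i.e.\ $E$ a single point) is trivial since then $f-f_E\equiv 0$ on $E$.

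The routine bookkeeping is the only real work here, and the single mild subtlety — which I expect to be the "main obstacle," though it is minor — is reconciling the factor $r^{-1/p}$ produced by rewriting $(4r)^{1-1/p}$ as $4r\cdot(4r)^{-1/p}$ with the $r$-independent constant $P_p(R)=4\big(3\beta b^{R}/(4\alpha)\big)^{1/p}$ claimed in the statement; this is handled by the observation that a quasiconvex set of diameter $2r$ with more than one point has $2r\ge 1$, hence $4r\ge 2\ge 1$ and $(4r)^{-1/p}\le 1$, while the single-point case is vacuous. At $p=\infty$ the bound reduces to $\|f-f_E\|_{L^\infty(E,\mu)}\le 4r\|\nabla f\|_{L^\infty(E,\mu)}$, consistent with $P_\infty(R)=4$, matching the $p=\infty$ line of Theorem~\ref{th: poincaré debole}. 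Assembling these pieces gives the corollary with no new ideas beyond Theorem~\ref{th: poincaré debole} and the elementary ball-growth estimate.
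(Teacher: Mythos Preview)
Your approach is the same as the paper's: use $E\subseteq B_R(x_0)$ together with $|B_R(x_0)|\le 3b^R$ to bound $\mu(E)\le 3\beta b^R$, and then insert this into the estimate~\eqref{LpEE} of Theorem~\ref{th: poincaré debole}.

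The only place where your write-up is muddled is the small-$r$ bookkeeping. To obtain the \emph{exact} constant $P_p(R)=4\big(3\beta b^{R}/(4\alpha)\big)^{1/p}$ from the bound $(\mu(E)/\alpha)^{1/p}(4r)^{1-1/p}$ one needs $r^{1-1/p}\le r$, i.e.\ $r\ge 1$; the conditions $r\ge 1/4$ or $4r\ge 1$ that you invoke only yield the weaker constant $4\big(3\beta b^{R}/\alpha\big)^{1/p}$, off by a factor $4^{1/p}$. The paper deals with this more cleanly: it observes that the case $r<1$ (that is, $r\in\{0,1/2\}$, since $\diam(E)\in\mathbb{N}$) is trivial and then assumes $r\ge 1$, at which point the stated constant follows immediately. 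You should adopt that split rather than trying to absorb $r^{-1/p}$ via $4r\ge 1$.
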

\begin{proof}

If $x\in E$, then $E\subseteq B_R(x)$, so that
\begin{equation*}
    \mu(E)\leq\mu(B_R(x)) \le 3\beta b^{R}. 
\end{equation*}
If $r<1$ the result is trivial, so we can suppose $r\geq 1$. Then the result directly follows from Theorem \ref{th: poincaré debole}.
\end{proof}
\begin{oss}
We remark that, for $p=\infty$, the conclusion of Corollary \ref{cor: poincare counting measure} coincides with that of Theorem \ref{th: poincaré debole}. 
 In particular, in this case, there is no actual need to assume any boundedness of the measure, nor bounded degree of the graph; the local $L^\infty$-Poincaré inequality for quasiconvex sets holds for any graph and any measure.  
\\ \indent We mention that a global inequality related to \eqref{PpRC} was obtained in \cite{CS} for $\alpha=\beta=1$ in the case $p=1,2$, under the additional assumption that the counting measure is globally doubling. \qed \end{oss}


\section{Optimality of Theorem \ref{th: poincaré debole}}\label{sec: opt}

The scope of this section is to discuss the optimality of Theorem \ref{th: poincaré debole} under different aspects. First we will prove by means of an example that for $p>1$, in general,
the assumption of quasiconvexity on $E$ cannot be replaced by the weaker assumption of being connected. Next, we prove that for measures which are not positively bounded from below the local $L^p$-Poincaré inequality may fail. Finally, we prove that for $p=1, \infty$ the constant in formula \eqref{LpEE} is optimal when $E$ is a ball, $X$ is a tree and $\mu\in \mathcal{M_\alpha^\beta}$.

\begin{ese}\label{es:quasiconvex}
Referring to Figure \ref{figure}, consider the infinite connected graph $X$ with vertex set labelled by $\mathbb{N}^2$ and the following proximity rule: $(j_1,k_1)\sim(j_2,k_2)$ if and only if $(i)$ $|j_1-j_2|+|k_1-k_2|=1$ or $(ii)$ $k_1=k_2$ odd, $j_1,j_2\leq k_1$ and $|j_1-j_2|\geq 4$.



For any $E \subset X$ and $p \ge 1$, let $\|\cdot\|_{L^p(E)}$ denote $\|\cdot \|_{L^p(E, |\cdot|)}$. Define the sequence of sets $E_k=\{(j,k)\in X: j\leq k \}$, $k\in 2\mathbb{N}$. It is clear that $E_k$ is connected, $1\leq\diam(E_k)\leq 3$ and $|E_k|=k+1.$

\begin{figure}[h]
\begin{center}
\begin{tikzpicture}
    [
        dot/.style={circle,draw=black, fill,inner sep=1pt},
    ]

\foreach \x in {0,...,8}{
    \foreach \y in {0,...,8}{
        \node[dot] at (\x,\y){ };
         \draw[thick] (0,\y) -- (8.5,\y);
         \draw[thick] (\x,0) -- (\x,8.5);
    }
}

\foreach \x in {1,...,8}
    \draw (\x,.1) -- node[below,yshift=-1mm] {\x} (\x,-.1);
\foreach \y in {1,...,8}
    \draw (.1,\y) -- node[xshift=-4mm] {\y} (-.1,\y);
\node[below,xshift=-2mm,yshift=-1mm] at (0,0) {0};

\draw[->,thick,-latex] (0,0) -- (0,9) node[left] {$k$};
\draw[->,thick,-latex] (0,0) -- (9,0) node[below] {$j$};

\draw[thick, out=90,in=90,in looseness=0.5,out looseness=0.5,-]  (3,7) to  (7,7);
\draw[thick, out=90,in=90,in looseness=0.5,out looseness=0.5,-]  (2,7) to  (7,7);
\draw[thick, out=90,in=90,in looseness=0.5,out looseness=0.5,-]  (2,7) to  (6,7);
\draw[thick, out=90,in=90,in looseness=0.5,out looseness=0.5,-]  (1,7) to  (7,7);
\draw[thick, out=90,in=90,in looseness=0.5,out looseness=0.5,-]  (1,7) to  (5,7);
\draw[thick, out=90,in=90,in looseness=0.5,out looseness=0.5,-]  (0,7) to  (7,7);
\draw[thick, out=90,in=90,in looseness=0.5,out looseness=0.5,-]  (1,7) to  (6,7);
\draw[thick, out=90,in=90,in looseness=0.5,out looseness=0.5,-]  (0,7) to  (5,7);
\draw[thick, out=90,in=90,in looseness=0.5,out looseness=0.5,-]  (0,7) to  (4,7);
\draw[thick, out=90,in=90,in looseness=0.5,out looseness=0.5,-]  (0,7) to  (6,7);
\draw[thick, out=90,in=90,in looseness=0.5,out looseness=0.5,-]  (0,5) to  (4,5);
\draw[thick, out=90,in=90,in looseness=0.5,out looseness=0.5,-]  (0,5) to  (5,5);
\draw[thick, out=90,in=90,in looseness=0.5,out looseness=0.5,-]  (1,5) to  (5,5);
\end{tikzpicture}

\end{center}
\caption{A portion of the graph $X$. Proximity rule $(i)$ defines the grid, while $(ii)$ introduces the curved edges.\footnotesize}
\label{figure}
\end{figure}
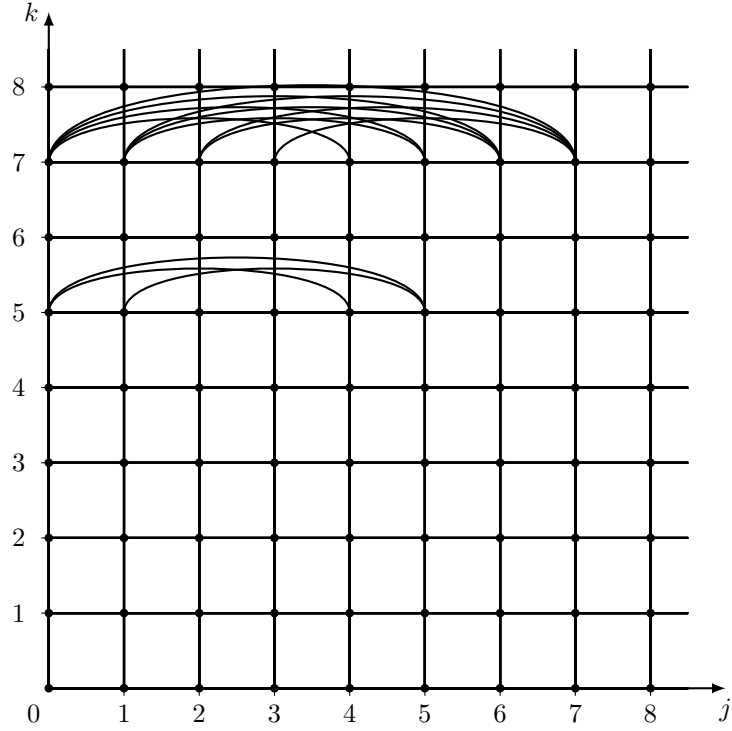
\noindent Consider the function $f:X\to\mathbb{C}$ such that $f(j,k)=j$, for every $(j,k)\in X$. The average of $f$ on $E_k$, with respect to the counting measure, is equal to
\begin{equation*}
   f_{E_k}=\frac{1}{k+1} \sum_{(j,k)\in E_k}f(j,k)= \frac{1}{k+1}\sum_{j=0}^k j=\frac k2.
\end{equation*}

\noindent It follows that, for $1<p<\infty$,
\begin{equation*}
\| f-f_{E_k}\|_{L^p(E_k)}^p=\sum_{j=0}^k \bigg|j-\frac k2\bigg|^p=2 \sum_{j=0}^{k/2} j^p \approx k^{p+1}, 
\end{equation*}
and
\begin{equation*}
\| f-f_{E_k}\|_{L^\infty(E_k)}=\max_{j=0,..,k} \bigg|j-\frac k2\bigg|=\frac k2.
\end{equation*}

\noindent Moreover, for $(j,k)\in E_k$, $|\nabla f|(j,k)=|j-(j-1)|+|j-(j+1)|=2$ if $j\neq 0$, and $\nabla f(0,k)=1$. 
Therefore,  $\| |\nabla f|\|_{L^\infty(E_k)}=2$ and for $1<p<\infty$,
\begin{equation*}
    \| |\nabla f|\|_{L^p(E_k)}^p\approx \sum_{j=0}^k 1= k+1.
\end{equation*}

\noindent Altogether, for any $1<p\leq \infty$,
\begin{equation*}
    \frac{\| f-f_{E_k}\|_{L^p(E_k)}}{|E_k|^{1/p}\diam{(E_k)}^{1-1/p} \| |\nabla f|\|_{L^p(E_k)}}\approx k^{1-1/p}\to \infty, \quad \text{as } k\to\infty,
\end{equation*}
contradicting \eqref{LpEE}. \qed
\end{ese}
We now exhibit a graph endowed with a measure which is not positively bounded from below on which the $L^p$-Poincaré inequality fails for $1\leq p<\infty$.
\begin{ese}\label{es : 2} Let $X=\mathbb Z$ endowed with the measure $$\mu(j)= \begin{cases} \displaystyle\frac{1}{|j|^{}} &\text{if $j \ne 0$,} \\ 
1 &\text{otherwise.} 
\end{cases}$$
Consider the sequence of sets $E_k=[-k,k]\cap \mathbb Z$ and define $f(j)=j$. It is clear that $f_{E_k}=0$ for any $k \ge 1.$ Moreover, $\mu(E_k) \approx \log k$  and $\mathrm{diam}(E_k)=2k+1$ for any $k \ge 2$. A simple computation shows that for any $1 \le p<\infty,$ 
\begin{align*}
   \|f-f_{E_k}\|_{L^p(E_k,\mu)}=\|f\|_{L^p(E_k,\mu)}= \bigg(\sum_{j=-k}^k |j|^{p-1}\bigg)^{1/p} \approx k.
\end{align*} Moreover,
\begin{align*}
    \||\nabla f|\|_{L^p(E_k,\mu)} \approx \bigg(\sum_{j=1}^k \frac{1}{|j|}\bigg)^{1/p} \approx (\log k)^{1/p},
\end{align*} for any $k \ge 2.$ It follows that 
\begin{align*}
    \frac{\|f-f_{E_k}\|_{L^p(E_k,\mu)}}{ \mu(E_k)^{1/p}\mathrm{diam}(E_k)^{1-1/p}\||\nabla f|\|_{L^p(E_k,\mu)}} \approx \frac{k^{1/p}}{(\log k)^{2/p}} \to \infty, \qquad \mathrm{as} \ k \to \infty. 
\end{align*} \qed
\end{ese}
We now discuss the optimality of the constant in Theorem \ref{th: poincaré debole} on trees.
Let $T$ be a tree such that $\mathrm{deg}(x) \ge 2$ for every $x \in T$. 
For any couple of points $x_0\sim y_0\in T$, we define the \textit{triangle} of height $r \in \mathbb N$ and root $[x_0,y_0]$ to be the set $T_0=\{x \in T: \  d(x,x_0)=d(x,y_0)-1\leq r\}.$ The base of $T_0$ is intended to be the set of points of $T_0$ at distance $r$ from $x_0$.

\begin{prop}\label{propPopt} Let $T$ be a tree such that  $2\le \mathrm{deg}(x)\le b+1$ for every $x\in T$,  $0<\alpha\le\beta<\infty$ and $\mu\in\mathcal{M}_\alpha^\beta$. Then, for every ball $B \subset T$ with $\mathrm{diam}(B)=2r$ and $p \in [1,\infty]$, there exists a function $f$ such that 
\begin{align}\label{exponentialgrowth}
    \frac{\| f-f_B\|_{L^p(B,\mu)}}{ \||\nabla f|\|_{L^p(B,\mu)}} \gtrsim C_p(B,\mu),
\end{align} where $C_p(B,\mu)=\mu(B)^{1/p}$ for every $p \in [1,\infty)$ and $C_\infty(B,\mu)=r.$ \end{prop}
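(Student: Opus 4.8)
The plan is to produce, for each exponent $p$, an explicit extremal function, treating $p=\infty$ and $p\in[1,\infty)$ by genuinely different constructions. First a reduction: since every vertex of $T$ has degree at least $2$, one checks that $\diam(B_\rho(o))=2\rho$ for every center $o$ and every $\rho\ge 0$ (extend a geodesic through $o$ in two directions), so the ball in the statement is $B=B_r(o)$ for its center $o$, and $B$ contains vertices at every distance $0,1,\dots,r$ from $o$; the case $r=0$ being trivial, assume $r\ge1$. For $p=\infty$ I take $f(x)=d(x,o)$: since $T$ is a tree, adjacent vertices have distances to $o$ differing by exactly $1$, hence $\nabla f(x)=\mathrm{deg}(x)\le b+1$ for all $x$, while $f_B\in[0,r]$ and $B$ meets the spheres of radius $0$ and $r$ about $o$, so $\|f-f_B\|_{L^\infty(B,\mu)}\ge\max(f_B,\,r-f_B)\ge r/2$; dividing gives \eqref{exponentialgrowth} with $C_\infty(B,\mu)=r$.

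For $p\in[1,\infty)$ the idea is to use the indicator of a branch of $T$, so that $\nabla f$ is supported on a single edge. For $w\in B$ write $B_w^{+}$ for the set of $y\in B$ having $w$ on the geodesic $[o,y]$, i.e.\ the part of $B$ lying beyond $w$. The combinatorial core is the claim: there is $v\in B\setminus\{o\}$ with
\[
\min\bigl(\mu(B_v^{+}),\ \mu(B)-\mu(B_v^{+})\bigr)\ \ge\ \frac{\mu(B)}{C},\qquad C:=\max\Bigl(4(b+1),\ \tfrac{4\beta}{\alpha}\Bigr).
\]
To prove it I split on the size of $\mu(B)$. If $\mu(B)<4\beta$, any neighbor $v$ of $o$ works: $\mu(B_v^{+})\ge\mu(v)\ge\alpha$, and since $o\notin B_v^{+}$ also $\mu(B)-\mu(B_v^{+})\ge\mu(o)\ge\alpha\ge\mu(B)/C$. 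If $\mu(B)\ge4\beta$, let $v_0$ be a vertex of $B$ of maximal distance from $o$ among those with $\mu(B_{v_0}^{+})>\mu(B)/2$ (the set is nonempty, containing $o$). Since $\mu(v_0)\le\beta<\mu(B)/2$, the vertex $v_0$ cannot lie on the sphere of radius $r$, so all its children (neighbors at distance $d(v_0,o)+1$) lie in $B$; there are at most $b+1$ of them and their sets $B_c^{+}$ partition $B_{v_0}^{+}\setminus\{v_0\}$, so the heaviest child $v$ satisfies $\mu(B_v^{+})\ge(\mu(B_{v_0}^{+})-\mu(v_0))/(b+1)>(\mu(B)/2-\beta)/(b+1)\ge\mu(B)/(4(b+1))$, while by maximality of $v_0$ we get $\mu(B_v^{+})\le\mu(B)/2$. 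Either way the claim holds.

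Granting the claim, let $u$ be the neighbor of $v$ on $[o,v]$ and let $S$ be the connected component of $T\setminus\{u\}$ containing $v$; since $T$ is a tree, $\{u,v\}$ is the only edge joining $S$ to its complement, and $B\cap S=B_v^{+}$. Put $f=\mathbbm{1}_S$. Then $\nabla f$ vanishes off $\{u,v\}$ and equals $1$ at each of $u,v\in B$, so $\|\nabla f\|_{L^p(B,\mu)}^p=\mu(u)+\mu(v)\le2\beta$. With $t:=f_B=\mu(B_v^{+})/\mu(B)$ one computes $\|f-f_B\|_{L^p(B,\mu)}^p=\mu(B_v^{+})(1-t)^p+(\mu(B)-\mu(B_v^{+}))t^p\ge2^{-p}\min(\mu(B_v^{+}),\mu(B)-\mu(B_v^{+}))\ge2^{-p}\mu(B)/C$, using $1-t\ge\tfrac12$ when $t\le\tfrac12$ and $t\ge\tfrac12$ otherwise. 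Dividing yields \eqref{exponentialgrowth} with $C_p(B,\mu)=\mu(B)^{1/p}$, the implied constant depending only on $p,b,\alpha,\beta$.

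I expect the main obstacle to be the combinatorial claim: one must locate a single edge of $T$ whose removal separates a portion of $B$ of mass comparable to $\mu(B)$ from the rest. The regime $\mu(B)$ small shows that no universal proportion is attainable, which is precisely where the a priori bounds $\alpha\le\mu\le\beta$ are needed; and the degree bound $b+1$ is what prevents all the mass from funnelling into a single child at the branching vertex $v_0$ in the other regime. Once the claim is in place, the estimates for $f=\mathbbm{1}_S$ are routine, and $p=\infty$ needs only the observation that $\nabla(d(\cdot,o))=\mathrm{deg}\le b+1$ on a tree.
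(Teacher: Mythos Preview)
Your proof is correct and follows essentially the same strategy as the paper: treat $p=\infty$ and $p<\infty$ separately, and for $p<\infty$ locate an edge whose removal splits off a branch of $B$ carrying mass comparable to $\mu(B)$, then use (essentially) its indicator as the test function. Your combinatorial claim is the direct analogue of the paper's claim that every triangle $T_0$ contains a sub-triangle $T'$ with $\mu(T')\approx\mu(T_0\setminus T')$; both are proved by a greedy descent, yours starting from the center $o$ and theirs from the root of a triangle.

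Your execution is somewhat cleaner in two places. For $p=\infty$ you take $f(x)=d(x,o)$ on all of $T$ and simply bound $\|f-f_B\|_{L^\infty}\ge r/2$, whereas the paper builds a two-triangle function with $f_B=0$; your choice avoids the auxiliary constant $C$. For $p<\infty$ you use the plain indicator $f=\mathbbm{1}_S$ and estimate $\|f-f_B\|_{L^p}$ directly, while the paper uses a three-valued function (values $1$, $-\mu(T')/\mu(T_j\setminus T')$, $0$) engineered so that $f_B=0$, and in consequence has $\nabla f$ supported on four vertices rather than two. The paper's route has the minor cosmetic advantage that $f_B=0$ throughout, but your version avoids an extra layer (first choosing a heavy triangle $T_j$, then a balanced sub-triangle $T'$). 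Either way, the dependence of the implicit constant on $\alpha,\beta,b$ enters at the same point: the bound $\mu\le\beta$ controls the mass lost at the pivot vertex, the degree bound $b+1$ controls the branching factor in the descent, and the bound $\mu\ge\alpha$ handles the small-$\mu(B)$ regime.
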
 
\begin{proof}
  Let $B=B_r(x_0) \subset T$.
Consider two points $x_1,x_2\sim x_0$ and let $T_1,T_2$ be the two disjoint triangles of height $r-1$ and roots, respectively, $[x_1,x_0], [x_2,x_0]$. Clearly $T_1,T_2\subset B$. Define $f$ on $B$ as
\begin{align*}
   f(x)= \begin{cases} d(x,x_0) &\text{if $x \in T_1$}, \\ 
   -Cd(x,x_0) &\text{if $x \in T_2$}, \\ 
   0 &\text{otherwise},
\end{cases} 
\end{align*} and extend $f$ on $T$ by imposing $f(x)=f(y)$ if $x \sim y$ on the remaining vertices.
We choose
\begin{equation*}
    C=\frac{\sum_{x \in T_1}d(x,x_0)\mu(x)}{\sum_{x \in T_2}d(x,x_0)\mu(x)},
\end{equation*}
so that $f_B=0$. Observe that $\|f\|_{L^\infty(B,\mu)}=r\max\{1,C\}$ and $\||\nabla f|\|_{L^\infty(B,\mu)} \le (b+1) \max\{1,C\}.$ Thus, \begin{align*}
    \frac{\|f\|_{L^\infty(B,\mu)}}{\||\nabla f|\|_{L^\infty(B,\mu)}} \ge  \frac{r}{b+1},
\end{align*}
which is \eqref{exponentialgrowth} for $p=\infty.$

We now focus on the case $p\in [1,\infty)$. We claim that for every triangle $T_0$ there exists a triangle $T' \subset T_0$, whose base is contained in the base of $T_0$, such that $ \mu(T')\approx \mu(T_0\setminus{T'})$, with constants depending only on $\alpha, \beta$ and $b$. If the above claim holds, we are done. 

Indeed,  it is easy to see that $B \setminus \{x_0\}$ can be decomposed  as the union of at most $b+1$ disjoint triangles $\{T_i\}_{i=1}^n$ with roots $[x_i, x_0]$, where $x_i\sim x_0$, and height $r-1$. Since $\mu(x_0)\leq \beta$ and $n\leq b+1$, it is clear that there exists at least one triangle $T_j$ among them  with measure $\mu(T_j)\gtrsim \mu(B)$. Now, by the aforementioned claim, we can choose a triangle $T'\subset T_j$ with root $[x',y']$ such that $\mu(T')\approx \mu(T_j\setminus T').$ Clearly $\mu(T')\approx \mu(T_j) \approx \mu(B).$ We conclude by defining $f$ on $B$ by
\begin{align*}
    f(x)= \begin{cases}1 &{x \in T'}, \\
    -\mu(T')/\mu(T_j\setminus T') &{x \in T_j\setminus T'}, \\ 
    0&\text{otherwise;}
    \end{cases} 
\end{align*} and we extend $f$ on $T$ by defining $f(x)=f(y)$ if $x \sim y$ on the remaining vertices.
It is obvious that $f_B=0$, and 
\begin{equation*}
    \sum_{x \in B}|f(x)|^p\mu(x)  \approx \mu(B).
\end{equation*}
Moreover, $|\nabla f|(x)=0$ unless $x =x_j, x_0, x',y'$, in which case
\begin{align*}
    |\nabla f|(x) \approx 1+ \frac{\mu(T')}{\mu(T_j\setminus T')} \approx 1.
\end{align*} It follows that for every $p \in [1,\infty)$
\begin{align*}
    \frac{\|f-f_B\|_{L^p(B,\mu)}}{\||\nabla f|\|_{L^p(B,\mu)}} \gtrsim \mu(B)^{1/p},
\end{align*} 
which is inequality \eqref{exponentialgrowth}.
\\ \indent It remains to prove the claim. Let $T_0$ be a triangle of height $r$ and, for every integer $n\in [1,r]$, let $T_n$ be the triangle of height $r-n$ of maximal measure among those contained in $T_{n-1}$. 
Let $n$ be the minimum integer for which
\begin{align*}
     \frac{\mu(T_n)}{\mu(T_0\setminus T_n)} \le 2.
\end{align*}
We can assume that $r$ is large enough so that the above inequality is actually satisfied for some $n\in [1,r]$ (indeed, if $r$ is small there is nothing to prove).
We have $\mu(T_n)\leq 2 \mu(T_0\setminus T_n)$ and, on the other hand,
\begin{align*}
    \mu(T_0 \setminus T_n)&\le \mu(T_0 \setminus T_{n-1})+b\mu(T_n)+\beta \\ &< \frac{1}{2}\mu(T_{n-1})+b\mu(T_n)+\beta \\ &\le \frac{1}{2}(b\mu(T_n)+\beta)+b\mu(T_n)+\beta \le \frac{3}{2}\bigg(b+\frac{\beta}{\alpha}\bigg)\mu(T_n).
\end{align*}
The claim is proved and the proof is completed.
\end{proof}

We underline that when $\mathrm{deg}(x) \ge 3$ the term $C_p(B,\mu)=\mu(B)^{1/p}$ has exponential growth with respect to the radius of $B$ since $\mu(B) \ge \alpha 2^r$ if $B=B_r(x_0)$ for some $x_0 \in T$ and $r \in \mathbb N.$ \\  

We now apply the previous proposition in order to deduce an optimal Poincaré inequality for $p=1$ and $p=\infty$ on a suitable class of trees, which includes the homogeneous tree endowed with the counting measure.

 \begin{thm}\label{opt} Let $T$ be a tree such that  $2\le \mathrm{deg}(x)\le b+1$ for every $x\in T$. Fix $0<\alpha\le\beta<\infty$ and let $\mu \in \mathcal{M}_\alpha^\beta$ be a measure on $T$ such that, for every ball $B\subset T$ with \textrm{diam}$(B)=R$, $\mu(B) \approx h(R)$ where $h: \mathbb{N} \to \mathbb R$ is a given function. 
Then, if $p\in [1,\infty]$, the following inequalities hold 
\begin{align}\label{poincareabove}
   \| f-f_B\|_{L^p(B,\mu)} \lesssim  h(R)^{1/p} R^{1-1/p} \||\nabla f|\|_{L^p(B,\mu)}.
\end{align} Moreover, if $p=1,\infty$, the previous inequalities are optimal, i.e., there exists a function $g$ such that
\begin{align}\label{poincarébelow}
         \frac{\| g-g_B\|_{L^p(B,\mu)}}{\||\nabla g|\|_{L^p(B,\mu)}} \gtrsim h(R)^{1/p} R^{1-1/p}, \qquad p=1,\infty.
\end{align}
 \end{thm}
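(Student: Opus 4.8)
The plan is to derive \eqref{poincareabove} as a direct consequence of the machinery already in place, and to get \eqref{poincarébelow} from Proposition \ref{propPopt}.

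\textbf{Upper bound \eqref{poincareabove}.} Since $T$ is a tree, every connected subset is geodesically convex, hence quasiconvex; in particular a ball $B$ with $\diam(B)=R$ is quasiconvex with $\diam(B)=2r$ where $R=2r$ (or $R$ odd, in which case $R\le 2r$ with $r=\lceil R/2\rceil$ — the discrepancy is harmless up to constants). Apply Theorem \ref{th: poincaré debole} with $E=B$: for any $\mu\in\mathcal M_\alpha^\beta\subset\mathcal M_\alpha$,
\[
\|f-f_B\|_{L^p(B,\mu)}\le\Big(\frac{\mu(B)}{\alpha}\Big)^{1/p}(4r)^{1-1/p}\|\nabla f\|_{L^p(B,\mu)}.
\]
Now substitute the hypothesis $\mu(B)\approx h(R)$ and $r\approx R$, absorbing $\alpha$ and the numerical factors into the implicit constant; this yields $\|f-f_B\|_{L^p(B,\mu)}\lesssim h(R)^{1/p}R^{1-1/p}\|\nabla f\|_{L^p(B,\mu)}$, which is \eqref{poincareabove}. (For $p=\infty$ the factor $h(R)^{1/p}$ is read as $1$ and one uses the $L^\infty$ case of Theorem \ref{th: poincaré debole} directly.)

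\textbf{Lower bound \eqref{poincarébelow} for $p=1,\infty$.} Here I invoke Proposition \ref{propPopt}, whose hypotheses ($2\le\deg(x)\le b+1$, $\mu\in\mathcal M_\alpha^\beta$) are exactly those assumed here. For a fixed ball $B$ with $\diam(B)=R$, set $r=\lceil R/2\rceil$ so that, up to constants, $R\approx r$. Proposition \ref{propPopt} furnishes a function $g$ with
\[
\frac{\|g-g_B\|_{L^p(B,\mu)}}{\|\nabla g\|_{L^p(B,\mu)}}\gtrsim C_p(B,\mu),
\]
where $C_p(B,\mu)=\mu(B)^{1/p}$ for $p<\infty$ and $C_\infty(B,\mu)=r$. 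For $p=\infty$: $C_\infty(B,\mu)=r\approx R=h(R)^{1/\infty}R^{1-1/\infty}$ (reading $h(R)^0=1$), giving \eqref{poincarébelow}. For $p=1$: $C_1(B,\mu)=\mu(B)\approx h(R)=h(R)^{1}R^{0}=h(R)^{1/1}R^{1-1/1}$, again giving \eqref{poincarébelow}. So in both endpoint cases the constant produced by the upper bound and the constant forced by Proposition \ref{propPopt} match up to multiplicative constants, proving optimality.

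\textbf{Where the work actually sits.} There is essentially no new obstacle at this stage: the hard analytic content — the telescoping/Hölder argument for the upper bound and, more delicately, the measure-splitting claim ("for every triangle $T_0$ there is a subtriangle $T'$ with $\mu(T')\approx\mu(T_0\setminus T')$") used to construct the extremal $g$ — has already been carried out in Theorem \ref{th: poincaré debole} and Proposition \ref{propPopt} respectively. The only points requiring a modicum of care are bookkeeping: matching the parameter $R=\diam(B)$ against the $2r$ appearing in the earlier statements (a parity issue absorbed into constants), and checking that the condition $\mu(B)\approx h(R)$ uniformly in $B$ is what lets the same exponent of $R$ appear on both sides so that "optimal" is literally true. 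I would also remark, as the paper does just before the theorem, that when $\deg\ge 3$ this common constant $h(R)^{1/p}R^{1-1/p}$ grows exponentially in $R$ (since $h(R)\gtrsim\alpha 2^{R/2}$), so the exponential blow-up of the Poincaré constant in Corollary \ref{cor: poincare counting measure} cannot be improved for $p=1,\infty$. The homogeneous tree with the counting measure ($\alpha=\beta=1$, $h(R)\approx b^{R/2}$) is the model example.
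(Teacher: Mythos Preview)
Your proposal is correct and follows exactly the paper's approach: the paper's proof is a one-liner that cites Theorem \ref{th: poincaré debole} (using $\mu(B)\approx h(R)$) for \eqref{poincareabove} and Proposition \ref{propPopt} for \eqref{poincarébelow} when $p=1,\infty$, which is precisely what you do with the details spelled out. The parity discussion for $R$ is unnecessary here since on a tree with $\deg(x)\ge 2$ every ball $B_r(x_0)$ has diameter exactly $2r$, but this over-caution does no harm.
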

\begin{proof} Since $\mu(B) \approx h(R)$, Theorem \ref{th: poincaré debole} implies $\eqref{poincareabove}$  and, in the case $p=1,\infty$, Proposition \ref{propPopt} yields \eqref{poincarébelow}. 
\end{proof}

\section{Flow measures on trees}\label{sec: flows}
 In this section we prove a global $L^p$-Poincaré inequality on connected sets for a tree endowed with a so called flow measure, to be defined soon. What is remarkable here, with respect to the results of Section \ref{sec: 2}, is that we are able to promote the inequality from local to global, despite the fact that the measure is not required to be bounded above nor below, and in principle may not even be locally doubling. Moreover, it is not required for the tree to have bounded degree.

Let us briefly introduce the setting and discuss the above comments. We let $T$ be a tree such that $\mathrm{deg}(x) \ge 2$ for every $x \in T$. Fix a point $o\in T$, which we call the origin, and a half-infinite geodesic $o=x_0,x_1,x_2,\dots$. We denote by $\ell(x)$ the level of the vertex $x$, which is defined by $\ell(x)=\lim_{n \to \infty} (n-d(x,x_n))$. For each vertex $x$, let $p(x)$ be its only neighbor such that $\ell(p(x))>\ell(x)$ and let $s(x)$ be the set of the remaining neighbors. We define a partial order relation on $T$ according to which $x\geq y$ if and only if $y$ is closer to $x$ than to $p(x)$.

In this context, we say that $\mu : T \to \mathbb{R}^+$ is a flow measure if
\begin{align}\label{flussi}
    \mu(x)=\sum_{y \in s(x)} \mu(y), \qquad \text{for every } x \in T.
\end{align}

 Observe that flow measures are not necessarily in the class $\mathcal{M_\alpha^\beta}$. Indeed, the most standard example of flow on a homogeneous tree, that is, on a tree where $\deg(x)=q$ for some integer $q\geq 2$ and every vertex $x$, is the \textit{canonical flow}, $\mu(x)=q^{\ell(x)}$, which can be arbitrarily large as well as arbitrarily close to zero. It is also clear from this example that flows are typically nondoubling, since the canonical flow measure of a ball centred at $o$ of radius $r \in \mathbb N$ on the homogeneous tree is approximately $q^r$. It is also possible to construct flow measures which are not even locally doubling; indeed every flow  measure (on any tree) such that the ratio $\mu(x)/\mu(y)$, with $y\in s(x)$, is not bounded above and below uniformly on the tree is not locally doubling. For a proof of this fact and a more thorough discussion on flow measures and their properties we refer the reader to \cite[Proposition 2.2]{LSTV}. 

The conservation property \eqref{flussi} characterizing flows
is equivalent to Kirchhoff’s current law: the total current received by a vertex must
equal the total current released by the vertex. Flows have remarkable properties from the harmonic analysis point of view. Indeed, in \cite{HS} the authors develop a Calderón--Zygmund theory on a homogeneous tree endowed with the canonical flow and, in \cite{LSTV}, this theory is adapted to general trees endowed with any locally doubling flow measure.

We define the difference operator acting on functions $f : T \to \mathbb{C}$ as
\begin{align*}
    d f(x)=f(x)-f(p(x)).
\end{align*}
Observe that for any $f : T \to \mathbb C$ and $x \in T,$ $|df(x)| \le |\nabla f|(x)$.

\begin{thm}\label{theorem: flow} Let $E \subset T$ be  a connected set with $\mathrm{diam}(E)=2r$,  $p \in [1,\infty]$ and $f$ any function on $T$. Then, for every flow measure $\mu$, $(T,\mu)$ satisfies the $L^p$-Poincaré inequality \eqref{PpC} with $P_p=4$, i.e.,
\begin{align*}
    \|f-f_E\|_{L^p(E,\mu)} \le 4r \||\nabla f|\|_{L^p(E,\mu)}.
\end{align*}
\end{thm}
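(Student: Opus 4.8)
The plan is to recover $f-f_E$ from the difference operator $d$ by a Hardy-type argument, but summing only over $E$ so as to obtain a strong inequality with an explicit constant. The starting point is a structural remark: since $T$ is a tree and $E$ is connected, $E$ is a finite subtree, each geodesic between two vertices of $E$ lies in $E$, and $E$ has a unique $\le$-maximal vertex $\hat x$ (the vertex of $E$ of largest level), which is an ancestor of every vertex of $E$. Consequently, for each $x\in E$ the ascending chain $x,p(x),p(p(x)),\dots,\hat x$ is contained in $E$ and has length $d(x,\hat x)\le\diam(E)=2r$, and telescoping $df$ along it gives $f(x)-f(\hat x)=\sum_{w\in E:\ x\le w<\hat x}df(w)$, hence $|f(x)-f(\hat x)|\le h(x):=\sum_{x\le w<\hat x}|df(w)|$ for every $x\in E$.

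Next I would route the comparison with $f_E$ through $\hat x$: for every $x\in E$,
\[
|f(x)-f_E|\le\frac1{\mu(E)}\sum_{y\in E}\bigl(|f(x)-f(\hat x)|+|f(\hat x)-f(y)|\bigr)\mu(y)=h(x)+h_E,
\]
so that $\|f-f_E\|_{L^p(E,\mu)}\le\|h\|_{L^p(E,\mu)}+h_E\,\mu(E)^{1/p}$; since by Hölder $h_E\,\mu(E)^{1/p}\le\|h\|_{L^p(E,\mu)}$ (and $h_E\le\|h\|_{L^\infty(E)}$ when $p=\infty$), the theorem reduces to the Hardy-type bound $\|h\|_{L^p(E,\mu)}\le 2r\,\|df\|_{L^p(E,\mu)}$, after which one concludes using $|df|\le\nabla f$ pointwise.

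The heart of the matter is the estimate, valid for every $w\in E$ with $w<\hat x$,
\[
\sum_{x\in E:\ x\le w}\mu(x)\le 2r\,\mu(w).
\]
To see it, iterating the conservation law $\mu(w)=\sum_{y\in s(w)}\mu(y)$ shows that, for every $j$, the descendants of $w$ at depth $j$ have total mass at most $\mu(w)$; moreover, if $x\in E$ and $x\le w$, then $x\le w\le\hat x$ all lie on one chain, so $d(x,w)=d(x,\hat x)-d(w,\hat x)\le 2r-1$, and only the depths $0,1,\dots,2r-1$ arise. Feeding this, together with the elementary bound $h(x)^p\le d(x,\hat x)^{p-1}\sum_{x\le w<\hat x}|df(w)|^p\le(2r)^{p-1}\sum_{x\le w<\hat x}|df(w)|^p$ (Jensen/Hölder, the prefactor being $1$ when $p=1$), into an interchange of the order of summation yields $\|h\|_{L^p(E,\mu)}\le 2r\,\|df\|_{L^p(E,\mu)}$; for $p=\infty$ one simply uses $h(x)\le d(x,\hat x)\,\|df\|_{L^\infty(E)}\le 2r\,\|df\|_{L^\infty(E)}$. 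Altogether $\|f-f_E\|_{L^p(E,\mu)}\le 2\|h\|_{L^p(E,\mu)}\le 4r\,\|\nabla f\|_{L^p(E,\mu)}$, while the case $\diam(E)=0$ is trivial since then $f\equiv f_E$ on $E$. I expect the displayed flow estimate to be the only real obstacle: it is the unique place where the defining property of flow measures enters, and obtaining the sharp length factor $2r$ — rather than $2r+1$ — hinges on exploiting the strict inequality $w<\hat x$, i.e.\ on not counting the top edge of the subtree $E$.
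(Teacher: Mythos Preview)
Your proof is correct and follows essentially the same route as the paper's: both arguments route the comparison through the unique $\le$-maximal vertex of $E$, telescope $f$ along the ascending chain to obtain a Hardy-type sum of $|df|$, use the flow identity to bound $\sum_{x\in E:\,x\le w}\mu(x)$ by a multiple of $\mu(w)$, and then swap the order of summation. The only cosmetic differences are that you package the Hardy step via the auxiliary function $h$ and control the constant term $h_E$ by H\"older (giving $\|f-f_E\|_{L^p}\le 2\|h\|_{L^p}$), whereas the paper applies Jensen directly to $|f(x)-f_E|^p$ and splits with $(a+b)^p\le 2^{p-1}(a^p+b^p)$; and you are a bit more careful in summing only over $w<\hat x$, which cleanly yields the factor $2r$ in the flow estimate without any off-by-one ambiguity.
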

\begin{proof} Let $E\subset T$ be a finite connected set with $\mathrm{diam}(E)=2r$. It is easy to see that
\begin{align}\label{4.2}
    \sup_{x \in E}|\{z \in E : z \ge x\}| \le 2r.
\end{align}
Denote by $x_E$ the vertex with maximum level in $E$. Then, we have that 
\begin{align*}
    |f(x)-f_E| &\le \sum_{y \in E} \bigg( \sum_{x_E \ge z \ge x}|df(z)|+\sum_{x_E \ge z \ge y}|df(z)|\bigg)\frac{\mu(y)}{\mu(E)} \\ &\le 2 \|df\|_{L^\infty(E,\mu)}\sup_{x \in E}|\{z \in E : z \ge x\} \\ & \le 4r \|df\|_{L^\infty(E,\mu)}.
\end{align*} Passing to the supremum and using that $|df|\le |\nabla f|$, we get the desired inequality when $p=\infty.$ \\ 
Assume now $p\in[1,\infty).$ By applying Jensen's inequality, we get that
\begin{align*}
    \sum_{x \in E} |f(x)-f_E|^p\mu(x) &= \sum_{x \in E}  \bigg|\sum_{y \in E} (f(x)-f(y))\frac{\mu(y)}{\mu(E)}\bigg|^p\mu(x) \\ &\le \sum_{x \in E} \sum_{y \in E} |f(x)-f(y)|^p\frac{\mu(y)}{\mu(E)}\mu(x)\\ 
    &\le \sum_{x \in E} \sum_{y \in E} \bigg(\sum_{x_E \ge z \ge x}|df(z)|+\sum_{x_E \ge z \ge y}|df(z)|\bigg)^p\frac{\mu(y)}{\mu(E)}\mu(x).
    \end{align*}
    Then, since $(a+b)^p \le 2^{p-1}(a^p+b^p)$ for any $a,b \ge0$, by Hölder's inequality, \eqref{4.2} and Fubini's Theorem we obtain
    \begin{align*} 
    &\sum_{x \in E} \sum_{y \in E} \bigg(\sum_{x_E \ge z \ge x}|df(z)|+\sum_{x_E \ge z \ge y}|df(z)|\bigg)^p\frac{\mu(y)}{\mu(E)}\mu(x) \\ 
    &\le 2^{p} (2r)^{p/p'} \sum_{x \in E} \ \sum_{x_E \ge z \ge x}|df(z)|^p\mu(x) \\ 
    &=2^{p} (2r)^{p/p'} \sum_{z \in E}|df(z)|^p \sum_{E \ni x \le z} \mu(x) \\ 
    &\le2^{p} (2r)^{p/p'+1} \sum_{z \in E}|df(z)|^p \mu(z).
\end{align*} In the last line we have used that, for a flow measure, $\sum_{E \ni x \le z}\mu(x) \le \mu(z) \mathrm{diam}(E)$.  Since $|df|\le |\nabla f|$, the above inequalities imply the desired result. 
\end{proof}

\begin{oss}
Observe that a flow measure is a volume measure, in the sense that there exist an edge weight $\omega_{xy}=\omega_{yx}$ that is nonzero if and only if $x \sim y$ and such that
\begin{align*}
    \mu(x)=\sum_{y \in T} \omega_{xy}.
\end{align*}
In the case of a flow, one should choose 
\begin{align*}
    \omega_{xy}=\begin{cases} \frac{1}{2}\min\{\mu(x),\mu(y)\} & x \sim y,\\
    0 &\text{otherwise.}
    \end{cases}
\end{align*}
In the context of weighted graphs, in place of the length of the gradient $|\nabla f|$ it often appears the quantity
\begin{align*}
|\nabla f|_2(x)= \bigg(\sum_{y \sim x} \frac{\omega_{xy}}{\mu(x)}|f(x)-f(y)|^2\bigg)^{1/2},
\end{align*}
which is the right notion of (modulus of the) gradient for the probabilistic Laplacian generated by the transition probability $p(x,y)=\omega_{xy}/\mu(x)$. It is well known (see e.g. \cite{CK}) that $|\nabla f|_2$ and $|\nabla f|$ are comparable quantities if the vertex degree is uniformly bounded and $\inf_{x,y} p(x,y)> 0$. Both the conditions are satisfied from any flow measure which is locally doubling, as a consequence of \cite[Proposition 2.2 and Corollary 2.3]{LSTV}. However, even for non locally doubling flows, it is easy to see that, for any $x\in T$,
\begin{align*}
    |d f(x)| \le 2^{1/2}|\nabla f|_2(x).
\end{align*}
Indeed, 
\begin{align*}
    \frac{1}{2^{1/2}}|f(x)-f(p(x))| \le \frac{1}{2^{1/2}} \bigg(\sum_{y \in s(x)} \frac{\mu(y)}{\mu(x)} |f(x)-f(y)|^2+ |f(x)-f(p(x))|^2 \bigg)^{1/2} =|\nabla f|_2(x).
\end{align*}
It follows that the global Poincaré inequality proved in Theorem \ref{theorem: flow} transfers to the operator $|\nabla|_2$. Namely, for every function $f$ on $T$, every $p\in [1,\infty]$ and every connected subset $E$ of $T$ with $\diam({E})=2r$,
\begin{align*}
    \|f-f_E\|_{L^p(E,\mu)} \lesssim r \| |\nabla f|_2\|_{L^p(E,\mu)}.
\end{align*}
\end{oss}

\bigskip
	
	{\bf{Acknowledgments.}} 
	The authors thank the anonymous referee for useful suggestions which led to an improved version of the paper.\\ 
	 Work partially supported by the MIUR project ``Dipartimenti di Eccellenza 2018-2022" (CUP E11G18000350001) and by the project  ``Harmonic analysis on continuous and discrete structures'' funded by Compagnia di San Paolo (CUP E13C21000270007). The authors are members of the Gruppo Nazionale per l'Analisi Matema\-tica, la Probabilit\`a e le loro Applicazioni (GNAMPA) of the Istituto Nazionale di Alta Matematica (INdAM).
\bibliographystyle{plain}
{\small
\bibliography{references}}

\end{document}